\newtheorem{df}{Definition}[section]
\newtheorem{lm}[df]{Lemma}
\newtheorem{Th}[df]{Theorem}
\newtheorem{co}[df]{Corrolary}
\newtheorem{rem}[df]{Remark}
\newtheorem{OpenProblem}[df]{Open Problem}
\newcommand{\e}{\varepsilon}
\newcommand{\R}[1]{\mathbb{R}^{#1}}
\renewcommand{\d}{\operatorname{d}}
\newcommand{\epi}{\operatorname{Epi}}
\newcommand\INT[1]{\mathaccent'27{#1}}
\author{A. Fathi, M. Zavidovique}
\title{Ilmanen's Lemma on Insertion of C{$^{1,1}$} Functions\footnote{Work supported by ANR KAM faible}}
\begin{document}

\maketitle

\begin{abstract} We give a proof of Ilmanen's lemma, which asserts that between a locally semi-convex and a
locally semi-concave function it is possible to find a C$^{1,1}$ function.

\end{abstract}
\section*{Introduction}
This paper is essentially expository in nature. We will give a direct simple proof of the following fact due to Ilmanen, see \cite{Ilm}: between a locally semi-convex function (with a linear modulus) $f$ 
and  a locally semi-concave function (with a linear modulus) $g$, with $f\leq g$ everywhere, we can insert a C$^{1,1}$ function,
i.e. we can find a C$^{1,1}$ function $h$ with $f\leq h\leq g$ everywhere, see Theorem \ref{ins} below. This problem is of course trivial when $f<g$ everywhere,
and in that case one can find $h$ of class C$^\infty$. In his paper \cite{Ilm} Ilmanen sketches two proofs. There have been since then
other proofs, see for example \cite{Car}. About the same time we obtained our proof, Patrick Bernard \cite{Ber} gave a very nice one using iterated Lasry-Lions regularization.

We give a proof via a result on smoothness of lower convex envelops of  coercive functions, see Theorem \ref{sup}. This result is due to  Kirchheim and Kristensen \cite{KiK}, which itself builds up on  the work of Griewank and Rabier \cite{GrR}. The authors of the present work discovered independently this fact, and afterwards were able to point up at the earlier work. It is quite strange that our scheme to obtain easily 
Ilmanen's Lemma from \cite{KiK} did not materialize earlier. For sake of completeness we will derive a result on separation of closed subsets in manifolds that is useful in viscosity theory of PDE's, see for example \cite{Car}, from where we take the essential part of the argument.

The authors would like to thank Pierre Cardaliaguet. Without his help and stimulation this work would have never seen light.
The first author wishes to thank Naoki Yamada and Fukuoka University, for their support during the final writing of this paper.

\section{Semi-concave functions}
For functions defined on an open subset of a Euclidean space, we briefly recall 
the definition and properties of a locally semi-concave (or semi-convex) function 
for a modulus $\omega$.

Here by modulus $\omega$ we mean a {\sl continuous non-decreasing concave} function $\omega :[0,+\infty[\to [0,+\infty[$, with $\omega(0)=0$.
A linear modulus is a modulus $t\mapsto kt$, where $k\geq0$. 

In this work, if $O\subset\R{n}$, a function $f:O\to \R{}$ is said to be semi-concave with modulus $\omega$ if we can find a constant $C<+\infty$, such that for for every $x\in O$ we can find a linear map 
$\ell_x:\R{n}\to\R{}$ satisfying
$$\forall y\in O,\ \  f(y)-f(x)\leq \ell_x(y-x)+C\lVert y-x\rVert\omega(\lVert y-x\rVert).$$
We say that $f$ is semi-convex with modulus $\omega$ if $-f$ is semi-concave with modulus $\omega$.
Note that a concave (resp.\ convex) function is semi-concave (resp.\ semi-convex) for any modulus $\omega$.
 The function $f:O\to \R{}$ is  locally semi-concave (resp.\ semi-convex) with modulus $\omega$, if for every $x\in O$, we can find an open neighborhood $V_x$ of $x$ such that the restriction
$f\vert V_x$ is semi-concave on $V_x$ (resp.\ semi-convex) with modulus $\omega$. 

More generally, a function $f:O\to\R{}$ is said to be locally semi-concave (resp.\ convex) if for each $x\in O$, we can find an open neighborhood $V_x$ of $x$, and a modulus $\omega_x$, such that the restriction
$f\vert V_x$ is semi-concave on $V_x$ (resp.\ semi-convex) with modulus $\omega_x$. 

A standard reference for semi-concave functions is the book \cite{CaS}. A well adapted treatment for this work is Appendix A of 
\cite{FaF}. Note however that the definition given here of a semi-concave function with modulus $\omega$  is slightly more general than the one given in \cite{FaF}. In fact what we call here a semi-concave function with modulus $\omega$, is a function for which there exists $C$ such that the function is semi-concave  with modulus $C\omega$ in the sense of \cite{FaF}. This does not make significant difference but it allows to simplify somewhat the statements. 

We make one more observation. If $\omega :[0,+\infty[\to [0,+\infty[$ is continuous non-decreasing, and concave, then
$$\forall \lambda \geq 0,\forall t\geq 0, \ \  \omega(\lambda t)\leq \max(1,\lambda)\omega (t).$$
Hence, if $\lambda \geq 0$ is fixed, with the definition given any function (locally) semi-concave with modulus $t\mapsto\omega(\lambda t)$ is also (locally) semi-concave with modulus $\omega$.

We know recall that a function $f:O\to \R{}$ is said to be C$^{1,\omega}$, if it is differentiable everywhere, and for every $x\in O$, we can find a neighborhood $V_x$ and a constant $K_x$ such that
$$\forall y,z\in V_x,\ \  \lVert \d_yf-\d_zf\rVert\leq K_x\omega(\lVert y-z\rVert).$$
If the modulus $\omega$ is linear a C$^{1,\omega}$ is simply a C$^{1,1}$ function.

An extremely useful fact is that a function $f:O\to \R{}$ is both locally semi-concave and locally semi-convex for the modulus $\omega$ if and only if it is C$^{1,\omega}$. A proof of this fact can be found for example in \cite[A19, page 36]{FaF}.
Conversely, any C$^{1,\omega}$ is both locally semi-convex and concave for the modulus $\omega$.

As explained in \cite{FaF} the notions of locally semi-concave or semi-convex functions (for a linear modulus) make perfect sense on a manifold.

\section{Convex envelop of a coercive function}
In this section, we state and prove a result due to Kirchheim and Kristensen \cite{KiK}. We will restrict ourselves to the case where the ambient space is an open subset $O\subset \R{n}$, for some $n\in \mathbb{N}$. In order to avoid problems coming from the boundary, we will only consider coercive functions defined on $O$.
\begin{df}\rm
Let $f:O\to \R{}$. We will say that $f$ is coercive on $O$,  if for any $r\in \R{}$ the set $f^{-1}(]-\infty,r])$ is relatively compact in $O$. \end{df}

\begin{rem}\label{rem}\rm 1) A  continuous function $f:O\to \R{}$ is coercive on $O$ if and only  if it is bounded from below, and proper, i.e. the set $f^{-1}(K)$ is compact, for every compact subset $K\subset\R{}$.

2) A  function $f:O\to \R{}$ is coercive on $O$ if and only if $f(x)\to+\infty$, as $x\to +\infty$ in the locally compact space $O$.

3) For a function $f:\R{n}\to\R{}$ coercivity is therefore the usual concept $f(x)\to+\infty$, as $\lVert x\rVert\to+\infty$.

4) If $O$ is a bounded open set, then a coercive  function on $O$ is a function $f:O\to \R{}$ which verifies that
$$\lim_{x\to \R{n}\setminus O}\|f(x)\|= +\infty.$$
\end{rem}

We are going to prove the following result, see \cite[Theorem page 726]{KiK}:
\begin{Th}\label{sup}
Suppose $O$ is an open convex subset in $\R{n}$. If $f:O\to \R{}$ is coercive on $O$ and locally semi-concave with modulus $\omega$, then its (lower) convex envelop $f^*$ is C$^{1,\omega}$.
\end{Th}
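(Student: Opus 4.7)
The plan is to invoke the characterization recalled just above the theorem: a function is $C^{1,\omega}$ if and only if it is both locally semi-convex and locally semi-concave with modulus $\omega$. Since $f^*$ is convex (being the supremum of all affine functions majorized by $f$), it is automatically semi-convex with any modulus. The entire difficulty is therefore to show that $f^*$ is locally semi-concave with modulus $\omega$.

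For this I would exploit the Carathéodory-type representation
$$f^*(x)=\inf\Bigl\{\sum_{i=0}^{n}\lambda_i f(x_i):\lambda_i\geq 0,\ \sum_i\lambda_i=1,\ \sum_i\lambda_i x_i=x,\ x_i\in O\Bigr\}.$$
The first technical step is to use coercivity to show that this infimum is attained, and that when $x$ ranges over a compact subset of $O$ the minimizing points $x_i$ remain in a fixed compact subset $K\subset O$. The argument is standard: $f$ is bounded below (by coercivity and Remark \ref{rem}), so in any near-minimizing combination each $\lambda_i(f(x_i)-\inf f)$ is controlled by $f^*(x)-\inf f$; whenever $\lambda_i$ stays away from $0$, this bounds $f(x_i)$ from above, and coercivity then keeps $x_i$ in a compact subset of $O$. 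The indices for which $\lambda_i\to 0$ can simply be dropped from the combination.

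The second and key step is the semi-concavity estimate itself. Fix $x\in O$ with an optimal representation $x=\sum_i\lambda_i x_i$, $f^*(x)=\sum_i\lambda_i f(x_i)$, $x_i\in K$. By local semi-concavity of $f$ and compactness of $K$, there exist a neighborhood $W$ of $K$ in $O$, a constant $C>0$, and, for each $i$, a linear form $\ell_{x_i}$ such that
$$f(z)-f(x_i)\leq \ell_{x_i}(z-x_i)+C\|z-x_i\|\omega(\|z-x_i\|)$$
for all $z\in W$. For $y$ sufficiently close to $x$, each point $x_i+(y-x)$ lies in $W$, and since $\sum_i\lambda_i(x_i+(y-x))=y$, the definition of $f^*$ yields
$$f^*(y)\leq \sum_i\lambda_i f(x_i+(y-x))\leq f^*(x)+\ell_x(y-x)+C\|y-x\|\omega(\|y-x\|),$$
with $\ell_x=\sum_i\lambda_i\ell_{x_i}$. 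This is exactly the semi-concavity inequality for $f^*$ at $x$, with modulus $\omega$ and a constant that is locally uniform in $x$ because $K$ is.

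The main obstacle, as I see it, is not the final inequality (which falls out mechanically from the identity $y-x=\sum_i\lambda_i((x_i+(y-x))-x_i)$) but rather the attainment/compactness bookkeeping: one must carefully rule out minimizing sequences whose mass escapes to the boundary of $O$, and check that a single compact $K$, a single constant $C$, and bounded linear forms $\ell_{x_i}$ can be chosen uniformly as $x$ varies in a neighborhood of the base point. The convexity of $O$ enters precisely here, to ensure that the translated points $x_i+(y-x)$ still make sense and remain inside the neighborhood $W$ on which the semi-concavity of $f$ holds.
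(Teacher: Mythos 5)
Your reduction is the same as the paper's: show that $f^*$ is locally semi-concave with modulus $\omega$, then invoke the characterization of C$^{1,\omega}$. The Carathéodory step is also shared. But the core of your argument is where you and the paper part ways, and your version has a genuine gap.

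You want an attained minimizer $f^*(x)=\sum_i\lambda_if(x_i)$ with \emph{all} the points $x_i$ in a fixed compact $K\subset O$, so that you can translate \emph{every} $x_i$ by $y-x$. The compactness claim does not follow from the sketch you give. The inequality $\lambda_i\bigl(f(x_i)-\inf f\bigr)\leq f^*(x)-\inf f$ bounds $f(x_i)$ only when $\lambda_i$ is bounded away from zero; when $\lambda_i$ is small you lose all control of $x_i$, which can approach $\partial O$ or escape to infinity. Your fix, ``drop the indices for which $\lambda_i\to 0$,'' is not sound: along a minimizing sequence, removing the $i$-th term destroys the constraint $\sum_j\lambda_j^mx_j^m=x$ unless one also has $\lambda_i^mx_i^m\to 0$, which coercivity alone does not give. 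And even granting attainment, the translates $x_i+(y-x)$ for points $x_i$ near $\partial O$ may leave $O$ altogether, and the semi-concavity constant at such $x_i$ is not under control. This is not mere ``bookkeeping''; it is exactly the place where the naive translation argument breaks.

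The paper's proof is organized precisely to sidestep this. It works with a minimizing \emph{sequence} (never needing attainment), orders the weights $\alpha_1^m\geq\cdots\geq\alpha_{n+2}^m$, and observes that the largest one satisfies $\alpha_1^m\geq 1/(n+2)$, whence $f(x_1^m)\leq(n+2)(M_K+1)$ and $x_1^m$ stays in a fixed compact $K'$. It then perturbs \emph{only} the first point, replacing $x_1^m$ by $x_1^m+\alpha_1^{-1}h$; since the weight on that point is $\alpha_1^m$, the barycenter shifts by exactly $\alpha_1^m\alpha_1^{-1}h\to h$, which is what one needs. The other points $x_i^m$, $i\geq 2$, are never moved and never need to lie in any compact set. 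The semi-concavity estimate is invoked at the single point $x_1$, which is safely inside $K'$, and the factor $\alpha_1^{-1}\leq n+2$ is absorbed into the constant using the concavity of $\omega$. To repair your proof you would need to adopt this ``perturb only the heaviest atom'' device, or else prove both the attainment and the uniform compactness of the whole optimal system of points --- a much harder task, and one the paper deliberately avoids.
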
 
\begin{proof} The proof is also taken from \cite{KiK}. 
Note that $f^*$ being convex is locally semi-convex with modulus $\omega$ (in fact with any modulus). From
a result quoted above, it therefore suffices to show that $f^*$ is also  locally semi-concave with modulus $\omega$.

Since $f$ is bounded below, subtracting a constant if necessary, we can assume $f\geq 0$ everywhere. We now recall that the epigraph of $f$ is
$$\epi(f)=\{(x,t)\mid x\in 0, t\geq f(x)\}.$$
As is well-known, the convex closure of $\epi(f)$ in $O\times \R{}$ is the epigraph of $f^*$.
Since $O\times\R{}\subset \R{n}\times\R{}=\R{n+1}$, we can  apply Caratheodory's Theorem, 
see  \cite[Theorem 17.1, page 155]{Roc} to obtain that any point in the convex set generated by $\epi(f)$ is a convex combination of $n+2$ points in $\epi(f)$ (in fact the number can be cut down to $n+1$ using instead \cite[Corollary 17.1.5, page 157]{Roc}, but this is not essential). This implies that, if  $x\in O$ is given,
we can find sequences
$x^m_1, \dots, x^m_{n+2}\in O, \alpha^m_1,\dots,\alpha^m_{n+2}\in [0,1], m\geq1$ such that 
$$ \alpha^m_1\geq\cdots\geq\alpha^m_{n+2},\sum_{i=1}^{n+2} \alpha^m_i=1,\ \  \sum_{i=1}^{n+2} \alpha^m_ix^m_i=x $$
and
\begin{equation}\label{f*x}
\sum_{i=1}^{n+2}\alpha^m_if(x^m_i)\to f^*(x), \text{when  $m\to\infty$.}
\end{equation}

We now fix an arbitrary compact subset $K\subset O$. Set $M_K=\sup\{f(z)\mid z\in K\}<+\infty$. Since $f$ is coercive
the set 
$$K'=\{z\in O\mid f(z)\leq (n+2)(M_K+1)\}$$
 is compact.
Since $f$ is locally semi-concave with modulus $\omega$ and $K'$ is compact, we can find positive constants $\delta$ and $\lambda$ such that the inclusion $\overline B(y,\delta)\subset O$ holds for every $y\in K'$, and
\begin{equation}\label{sem}
\forall y\in K',\exists \ell_y\in \R{n*},\forall v\in B(0,\delta),\ \ f(y+v)\leqslant f(y)+\ell_y(v)+\lambda \lVert v\rVert\omega(\lVert v\rVert),
\end{equation}
where $ \R{n*}$ is the dual of $ \R{n}$, i.e.\ the set of linear maps from $ \R{n}$ to $\R{}$.

Let us now assume $ x\in K$. We consider the sequences
$x^m_1, \dots, x^m_{n+2}\in O, \alpha^m_1,\dots,\alpha^m_{n+2}\in [0,1], m\geq1$ 
obtained above. Since $\sum_{i=1}^{n+2}\alpha^m_if(x^m_i)\to f^*(x)\leq f(x)\leq M_K$, dropping the first terms we can assume
$$\forall m\geq 1, \ \ \sum_{i=1}^{n+2}\alpha^m_if(x^m_i)\leq M_K+1.$$
Note that as we said in the beginning of the proof, we are assuming (without loss of generality) that $f\geq 0$.
Therefore
\begin{equation}\label{MK1}
\forall m\geq 1, \ \ \alpha^m_1f(x^m_1)\leq M_K+1.
\end{equation}
We now use the assumed inequalities $\alpha^m_1\geq\cdots\geq\alpha^m_{n+2}$, to obtain $(n+2)\alpha_1^m\geq \sum_{i=1}^{n+2} \alpha^m_i=1$, hence 
\begin{equation}\label{alpha1}
(n+2)\alpha_1^m\geq 1.
\end{equation}
Together with (\ref{MK1}), this yields
\begin{equation}\label{MK2}
\forall m\geq 1,\ \  f(x^m_1)\leq (n+2)(M_K+1).
\end{equation}
Therefore the sequence $x^m_1$ lies in the compact set $K'$. Extracting we can assume 
$$x^m_1\to x_1\in K',\ \  \alpha^m_1\to \alpha_1.$$
It follows  from (\ref{alpha1}) that we also have
 $$(n+2)\alpha_1\geq 1. $$
 Therefore, if $\lVert h\rVert \leq \delta/(n+2)$,
we get $\lVert \alpha_1^{-1}h\rVert\leq \delta$, and by the choice of $\delta$ above, we have 
$x^m_1+\alpha_1^{-1}h,x_1+\alpha_1^{-1}h\in O$. Therefore 
\begin{equation}\label{f*y}
f^*\left(\alpha^m_1(x_1+\alpha_1^{-1}h)+\sum_{i=2}^{n+2} \alpha^m_ix^m_i\right)\leq \alpha^m_1f(x^m_1+\alpha_1^{-1}h)+\sum_{i=2}^{n+2} \alpha^m_if(x^m_i).
\end{equation}
Note now that $\alpha^m_1(x_1+\alpha_1^{-1}h)+\sum_{i=2}^{n+2} \alpha^m_ix^m_i=\alpha^m_1\alpha_1^{-1}h+\sum_{i=1}^{n+2} \alpha^m_ix^m_i=x+\alpha^m_1\alpha_1^{-1}h\to x+h$, as $m\to +\infty$. Since the function $f^*$ is convex, it is therefore continuous on the open set $O$.
Together with (\ref{f*x}) and (\ref{f*y}), this yields
\begin{align*}
f^*(x+h)-f^*(x)&=\lim_{m\to\infty}f^*\left(\alpha^m_1(x_1+\alpha_1^{-1}h)+\sum_{i=2}^{n+2} \alpha^m_ix^m_i\right)-\sum_{i=1}^{n+2}\alpha^m_if(x^m_i)\\
&\leq \lim_{m\to\infty}\alpha^m_1f(x^m_1+\alpha_1^{-1}h)+\sum_{i=2}^{n+2} \alpha^m_if(x^m_i)-\sum_{i=1}^{n+2}\alpha^m_if(x^m_i)\\
&=\lim_{m\to\infty}\alpha^m_1f(x^m_1+\alpha_1^{-1}h)-\alpha^m_1f(x^m_1)\\
&=\alpha_1[f(x_1+\alpha_1^{-1}h)-f(x_1)].
\end{align*}
Since $x_1\in K'$, and $\lVert \alpha_1^{-1}h\rVert\leq \delta$, we can now use
(\ref{sem}) to obtain a $\ell_{x_1}\in \R{n*}$, such that
\begin{align*}
f^*(x+h)-f^*(x)&\leq \alpha_1[\ell_{x_1}(\alpha_1^{-1}h)+\lambda \lVert \alpha_1^{-1}h\rVert\omega(\lVert \alpha_1^{-1}h\rVert)]\\
&=\ell_{x_1}(h)+\lambda \lVert h\rVert\omega(\alpha_1^{-1}\lVert h\rVert).
 \end{align*}
 Since $\omega$ is concave and non-decreasing, and $\alpha_1^{-1}\leq n+2$, we conclude that for all $x\in K$, we can find
 $\ell_{x_1}\in \R{n*}$, such that for all $h\in \R{n}$ with $\lVert h\Vert \leq \delta/(n+2)$, we have
 $$f^*(x+h)\leq f^*(x)+ \ell_{x_1}(h)+(n+2)\lambda \lVert h\rVert\omega(\lVert h\rVert).$$
 This shows that $f^*$ is locally semi-concave  with modulus $\omega$ in a neighborhood 
of $K$. Since $K$ is an arbitrary compact subset of $O$, the convex function $f^*$ is locally semi-concave on $O$ with modulus $\omega$. It is therefore C$^{1,\omega}$.
 \end{proof}
\begin{rem}\rm The condition of coercivity is not an artificial one. Here is an interesting example.
Consider the square $[0,1]\times [0,1]$. We define a function $f: [0,1]\times [0,1]\to [0,1]$ affine on the two triangles
$T_\geq=\{(x,y)\mid x,y\in [0,1], x\geq y\},T_\leq=\{(x,y)\mid x,y\in [0,1], x\leq y\}$ with
$f(0,0)=f(1,1)=1$ and $f(0,1)=f(1,0)=0$, see figure (\ref{pict}). 
It is concave, and hence semi-concave for any modulus. 
Its graph in $\R{3}$ is the upper part of the tetrahedron spanned by the four points $(0,0,1),(1,1,1),(0,1,0),(1,0,0)$. Therefore its lower convex envelop $f^*$ is the lower part of the tetrahedron. Hence $f^*$ is affine on each of the two triangles $T^\geq=\{(x,y)\mid x,y\in [0,1], x+y\geq 1\},T^\leq=\{(x,y)\mid x,y\in [0,1], x+y\leq 1\}$. This function $f^*$ is not differentiable at any point of the diagonal $\Delta=\{(x,y)\mid x,y\in [0,1], x= y\}$.
\end{rem}

\begin{figure}\label{pict}
\begin{center}
\includegraphics[height=120mm]{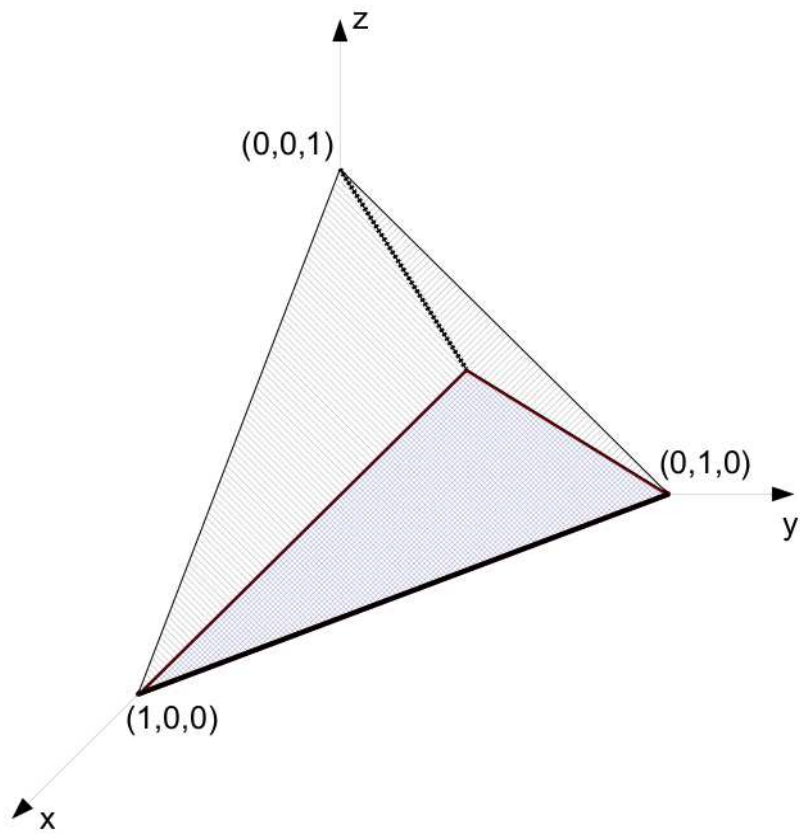}
\caption{A counter example in the non super-linear case}
\end{center}
\end{figure}

\section{Ilmanen's insertion lemma}
In this section, we prove the main theorem of this article which is that between a locally semi-concave function and a locally semi-convex function both for a linear modulus, there is a C$^{1,1}$ function. More precisely, we prove the following:
\begin{Th}[Ilmanen's Insertion Lemma]\label{ins}
 Let $M$ be a C$^2$ manifold, and let us consider $f:M\to \R{}$  a locally semi-convex function for a linear modulus and
 $g:M\to\R{}$ a locally semi-concave function for a linear modulus. If $f\leqslant g$ then there exists a C$^{1,1}$
 function $h:M\to\R{}$ such that $f\leqslant h\leqslant g$. 
\end{Th}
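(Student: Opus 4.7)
The plan is to combine a partition of unity reduction with Theorem~\ref{sup}, applied to a carefully extended semi-concave majorant of $g$. Since $C^{1,1}$-regularity and the sandwich condition $f\le h\le g$ are both stable under convex combinations with smooth, nonnegative weights summing to~$1$, a $C^2$-smooth partition of unity on~$M$ reduces the problem to a local statement in a chart: given $f\le g$ on an open ball $B\subset\R{n}$, with $f$ semi-convex and $g$ semi-concave for a linear modulus, produce a $C^{1,1}$ function $H$ on a smaller ball $B'\Subset B$ with $f\le H\le g$ on $B'$. Pick $L>0$ large enough that $F(x):=f(x)+\tfrac L2\lVert x\rVert^2$ is convex on $B$, while $G(x):=g(x)+\tfrac L2\lVert x\rVert^2$ remains semi-concave with some linear modulus $C_1$; since the shift is $C^\infty$, it suffices to find $H\in C^{1,1}$ with $F\le H\le G$ on $B'$.

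Next, I extend $F$ and $G$ from $\overline{B'}$ to all of $\R{n}$ via supporting functions. Letting $p_y\in\partial F(y)$ be a subgradient of the convex $F$ and $q_z$ an upper quadratic support vector of $G$ at $z$, set
\[
\bar F(x)=\sup_{y\in\overline{B'}}\bigl[F(y)+\langle p_y,x-y\rangle\bigr],\qquad
\bar G(x)=\inf_{z\in\overline{B'}}\bigl[G(z)+\langle q_z,x-z\rangle+C_2\lVert x-z\rVert^2\bigr],
\]
where $C_2\ge C_1$ is to be chosen. Then $\bar F$ is convex on $\R{n}$ and equal to $F$ on $B'$; $\bar G$ is locally semi-concave with linear modulus $C_2$ and equal to $G$ on $B'$; and $\bar G$ is coercive on $\R{n}$ since every summand in the infimum grows like $C_2\lVert x\rVert^2$ at infinity. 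By taking $C_2$ large and $B'$ small one further arranges $\bar F\le\bar G$ on all of $\R{n}$.

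Now Theorem~\ref{sup}, applied to the coercive, locally semi-concave function $\bar G$ on the convex open set $O=\R{n}$, provides a $C^{1,1}$ lower convex envelope $\bar G^*$ with $\bar G^*\le\bar G$. Since $\bar F$ is itself convex with $\bar F\le\bar G$, the maximality of $\bar G^*$ among convex functions bounded above by $\bar G$ yields $\bar F\le\bar G^*$. Restricting to $B'$ gives $F\le\bar G^*\le G$, and $h:=\bar G^*-\tfrac L2\lVert x\rVert^2$ is the desired $C^{1,1}$ local interpolant between $f$ and~$g$. Assembling these local interpolants via the partition of unity produces a global $h\in C^{1,1}(M)$ with $f\le h\le g$.

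The main obstacle is the global inequality $\bar F\le\bar G$ on $\R{n}$. This is delicate near the coincidence set $\{F=G\}\cap\overline{B'}$: at any point $y_0$ of this set, the subgradient of $F$ and the supergradient of $G$ at $y_0$ are forced to coincide (each equals the unique common derivative there), so the pointwise comparison holds only to first order, and the second-order slack must be supplied by choosing $C_2$ large. Away from the coincidence set the positive gap $G-F>0$ absorbs any discrepancy between $p_y$ and $q_z$, so the construction succeeds for $C_2$ large and $B'$ small. Once this extension step is in place, the remainder of the argument is a clean combination of Theorem~\ref{sup} with elementary properties of convex envelopes.
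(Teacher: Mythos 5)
Your strategy matches the paper at a high level: reduce to a local model problem by a smooth partition of unity, shift $f$ and $g$ by the same strongly convex quadratic so that the lower function becomes convex and the upper stays semi-concave, and then apply Theorem~\ref{sup} to obtain a $C^{1,1}$ convex envelope squeezed between them. Where you diverge is the mechanism for getting coercivity. The paper stays on the original ball $B(x,R)$ and adds the smooth, convex, \emph{ball-coercive} barrier $1/(R^2-\lVert y-x\rVert^2)$ to both functions: since $F$ is then convex and coercive on $B(x,R)$ with $F\le G$, $G$ is automatically coercive, and Theorem~\ref{sup} is invoked on the open convex set $B(x,R)$ directly. No extension, no new inequality to check. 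You instead extend $F$ and $G$ to all of $\R{n}$ by supporting affine functions and supporting paraboloids and then need the extensions to satisfy $\bar F\le\bar G$ globally.

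That last step is exactly where your proof has a genuine gap. Writing out what $\bar F\le\bar G$ requires, one finds the equivalent condition
\[
\frac{\lVert p_y-q_z\rVert^2}{4C_2}\;\le\;G(z)-F(y)-\langle p_y,z-y\rangle
\qquad\text{for all }y,z\in\overline{B'}.
\]
The right-hand side decomposes as $[G(z)-F(z)]+[F(z)-F(y)-\langle p_y,z-y\rangle]$, both terms nonnegative. Semi-concavity of $G$ controls $\lVert p_z-q_z\rVert^2$ by a multiple of $G(z)-F(z)$, but you also need $\lVert p_y-p_z\rVert^2$ to be dominated by the right-hand side, and convexity of $F$ gives no upper bound of $\lVert p_y-p_z\rVert$ in terms of $\lVert y-z\rVert$: a merely convex $F$ can have subgradient jumps. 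Near a coincidence point $z_\infty$ (where $G(z_\infty)=F(z_\infty)$ and both $G-F$ and the second term vanish as $y,z\to z_\infty$), your heuristic that ``the positive gap absorbs any discrepancy'' breaks down, because the gap is tending to zero and you have no quantitative control showing that $\lVert p_y-q_z\rVert^2$ tends to zero at least as fast. It is true that semi-concavity of $g$ with linear modulus forbids $G$ from touching $F$ exactly at a corner of $F$, so a finer analysis might recover the estimate, but you have asserted the inequality rather than proved it, and it is precisely the crux of your construction. The paper's barrier trick makes this entire difficulty vanish: coercivity is obtained \emph{inside} the original domain, so the only inequality ever needed is the one you already have, $F\le G$ on the ball itself.
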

The proof of this theorem is mainly local, it is enough to prove that for each $x\in M$ there is a neighborhood of $x$, $V_x$ and a $C^{1,1}$ function $h_x:V_x\to \R{}$ such that $f\leqslant h_x\leqslant g$ on $V_x$. As a matter of fact, if $(\varphi_x)_{x\in M}$ is a $C^\infty$ partition of unity subordinated to $(V_x)_{x\in M}$ then the function 
$$h=\sum_{x\in M} \varphi_x h_x$$
clearly satisfies the requirements of \ref{ins}. \\
>From the discussion above, it is enough to prove the following:
\begin{lm}
 Suppose $B(x,R)$ is the open Euclidean ball centered in $x\in \R{n}$ of radius $R<+\infty$. If $f,g:B(x,R)\to \R{}$ are respectively semi-convex and semi-concave for a linear  modulus, and if moreover $f\leqslant g$, then we can find a C$^{1,1}$ function $h:B(x,R)\to \R{}$ with $f\leqslant h\leqslant g$.
\end{lm}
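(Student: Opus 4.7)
I would reduce to the case where $f$ is convex, apply Theorem~\ref{sup} on suitable sub-balls after a coercive modification of $g$, and finally glue by a partition of unity on $B(x,R)$.

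First, normalize. Since $f$ is semi-convex with a linear modulus on $B(x,R)$, there is a constant $k>0$ such that $F := f + k\lVert \cdot - x\rVert^2$ is convex on $B(x,R)$; choosing $k$ large enough, $G := g + k\lVert \cdot - x\rVert^2$ is also still semi-concave with a linear modulus (a routine computation with the defining inequalities, using that $k\lVert \cdot -x\rVert^2$ is $C^\infty$ with bounded Hessian). One has $F\leqslant G$, so it suffices to produce a $C^{1,1}$ function $H$ on $B(x,R)$ with $F\leqslant H\leqslant G$; then $h := H - k\lVert \cdot - x\rVert^2$ answers the lemma.

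Next comes the local construction. Fix $x_0\in B(x,R)$ and $r>0$ with $\overline{B(x_0,r)}\subset B(x,R)$. The obstruction to applying Theorem~\ref{sup} directly to $G$ on $B(x,R)$ is coercivity: a semi-concave function may tend to $-\infty$ near $\partial B(x,R)$. On the compact set $\overline{B(x_0,r)}$ however, $G$ is continuous hence bounded, so I would introduce the $C^\infty$, non-negative, convex, coercive barrier
\[\phi(y) := \frac{1}{r^2 - \lVert y-x_0\rVert^2}\]
on $B(x_0,r)$. Then $\tilde G := G+\phi$ is locally semi-concave on $B(x_0,r)$ with a linear modulus (adding a $C^\infty$ function preserves this, with the constant varying locally) and is coercive there, so Theorem~\ref{sup} applies and the convex envelope $\tilde G^*$ is of class $C^{1,1}$ on $B(x_0,r)$. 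By definition $\tilde G^*\leqslant \tilde G = G+\phi$, so $\tilde G^*-\phi\leqslant G$; on the other hand $F+\phi$ is convex (sum of two convex functions) and bounded above by $\tilde G$, and the maximality of $\tilde G^*$ as a convex minorant gives $F+\phi\leqslant \tilde G^*$, i.e.\ $F\leqslant \tilde G^*-\phi$. Thus $H_{x_0,r}:=\tilde G^*-\phi$ is $C^{1,1}$ on $B(x_0,r)$ with $F\leqslant H_{x_0,r}\leqslant G$, and $h_{x_0,r}:=H_{x_0,r}-k\lVert \cdot-x\rVert^2$ is $C^{1,1}$ with $f\leqslant h_{x_0,r}\leqslant g$ there.

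Finally, glue. Cover $B(x,R)$ by a countable locally finite family of sub-balls $(B(x_i,r_i))_i$ with $\overline{B(x_i,r_i)}\subset B(x,R)$, take a $C^\infty$ partition of unity $(\psi_i)$ subordinated to this cover, and set $h := \sum_i \psi_i\, h_{x_i,r_i}$. This is $C^{1,1}$ as a locally finite sum of $C^{1,1}$ functions, and at each point of $B(x,R)$ it is a convex combination of values each lying in $[f(\cdot),g(\cdot)]$, hence $f\leqslant h\leqslant g$. The main obstacle is exactly the coercivity requirement of Theorem~\ref{sup}, which $G$ need not satisfy on $B(x,R)$; the decisive trick is to choose a \emph{convex} barrier $\phi$, because then $F+\phi$ remains a convex minorant of $\tilde G$ and subtracting $\phi$ after passing to the convex envelope recovers the inequality $F\leqslant \tilde G^*-\phi$ exactly, with no error term.
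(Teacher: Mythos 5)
Your proof is correct, and the core idea---add a quadratic term to convexify $f$, add a $C^\infty$ convex barrier to restore coercivity, apply Theorem~\ref{sup}, then subtract the corrections---is exactly the paper's. The difference is that you install the barrier on compactly contained sub-balls $B(x_0,r)$ and then glue by a partition of unity, whereas the paper installs the single barrier $\phi(y)=1/(R^2-\lVert y-x\rVert^2)$ directly on $B(x,R)$ and needs no further gluing. Your motivation for localizing---that ``a semi-concave function may tend to $-\infty$ near $\partial B(x,R)$''---is understandable but not an actual obstruction here: once you have convexified $f$ to a convex function $F_1$ on the bounded convex set $B(x,R)$, the function $F_1$ is automatically bounded below, so $F:=F_1+\phi$ is coercive on $B(x,R)$; and since $G+\phi\geq F$, the function $G+\phi$ inherits coercivity on the whole ball, exactly what Theorem~\ref{sup} needs. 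Your version costs you an extra countable locally finite cover plus a second partition of unity, but nothing in it is wrong; you are rediscovering, one level deeper, the same gluing device the paper already used to reduce from the manifold to the ball. Noticing that the hypothesis $f\leq g$ transmits the lower bound of the convexified $F$ to $G$ is what lets the paper dispense with that extra layer.
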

\begin{proof} We will denote by  $\langle \cdot ,\cdot \rangle$ and by $\|\cdot \|$ the usual scalar product and Euclidean norm on $\R{n}$. 
Because $f$ is assumed semi-concave (rather than just locally semi-concave) with a linear modulus, we can find a constant $K\geq 0$, such that for every $y\in B(x,R)$, we can find $\ell_y\in \R{n*}$ such that 
$$\forall z\in B(x,R),\ \  f(z)-f(y)\geq \ell_y(z-y)-K\lVert z-y\rVert^2.$$
Therefore, if we define the function $F$  on $B(x,r)$ by
$$F_1(z)=f(z)+\frac12 K\|z\|^2,$$
a simple computation yields
$$\forall z\in B(x,R), \ \ F_1(z)-F_1(y)\geq \ell_y(z-y)+\langle y ,z-y \rangle.$$
Therefore $F_1$ is convex on $B(x,R)$. Since $B(x,R)$ is a bounded convex set, we also obtain that $F_1$ is bounded from below.
Using that the function $y\mapsto 1/(R^2-\|y-x\|^2)$ is convex and coercive on $B(x,R)$, we conclude that the function
$F: B(x,R)\to \R{}$ defined by
$$F(y)=F_1(y)+\frac{1}{R^2-\|y-x\|^2}=f(y)+\frac12 K\|y\|^2+\frac{1}{R^2-\|y-x\|^2}$$
is convex and coercive on $B(x,R)$. We now define the function $G$ by
$$\forall y\in B(x,r), \ \ G(y)=g(y)+\frac 12 K\|y\|^2+\frac{1}{R^2-\|y-x\|^2}.$$
This function $G$ is locally semi-concave for a linear modulus, because it is the sum of the semi-concave function for a linear modulus $g$ and of a C$^{\infty}$ function. Since $F$ is convex, coercive, and $F\leq G$, it follows that $G$ is coercive and $F\leq G^*$, where $G^*$ is the lower convex envelop of $G$. We get $F\leqslant G^*\leqslant G$. By Theorem \ref{sup}, this convex envelop $G^*$ is C$^{1,1}$. 
It remains to set
$$\forall y\in B(x,r),\ \  h(y)=G(y)-\frac12 K\|y\|^2-\frac{1}{R^2-\|y-x\|^2},$$
then $f\leqslant h\leqslant g$, and $h$ is again C$^{1,1}$ as the sum of a C$^{1,1}$ function and a C$^\infty$ function.
\end{proof}
As an easy corollary we obtain the following more precise statement:
\begin{co}\label{coril} Under the hypothesis of Theorem \ref{ins} above,
denote by $E$ the set
$$E=\left\{x\in M, f(x)=g(x)\right\}.$$ 
 There exists a C$^{1,1}$ function $h:M\to\R{}$ such that $f\leqslant h\leqslant g$ and such that $f<h<g$ on $M\setminus E$.
 \end{co}
 \begin{proof}
 The function $\Phi=(g-f)/3$ is by definition a continuous function which is non negative and vanishes exactly on $E$.  Consider a covering of $M\setminus E$ by  open sets 
 $$M\setminus E=\bigcup_{i\in A}O_i,$$
such that each open set has a compact closure $\overline O_i$ contained in  $M\setminus E$. We choose $(\phi_i)_{i\in A}$ a partition of unity subordinated to the covering $(O_i)_{i\in A}$.
Note that each $\phi_i$ has compact support disjoint from $E$.
It follows that we can extend it to a C$^\infty$ function on $M$, with $\phi_i\equiv 0$ on $E$. Finally, for each $i\in A$, set $\alpha_i=\inf_{O_i} \Phi$. Note that each $\alpha_i$ is positive because $\overline O_i$ is compact and contained in $M\setminus E$. If $0<\beta_i\leq \alpha_i$, the function 
\begin{equation}\label{sum}
\phi=\sum_{i\in A}\beta_i \phi_i
\end{equation}
is continuous and $>0$ on $M\setminus E$, obviously $0$ on $E$. It also satisfies 
$$\forall x\in M,0\leqslant \phi(x)\leqslant \Phi(x)$$
therefore $\phi$ is continuous on $M$.  Moreover, up to taking $\beta_i$ rapidly decreasing to $0$,
see for example \cite[Lemma 3.2, page 722]{par}, we can assume without loss of generality that 
the sum (\ref{sum}) is convergent for the compact open C$^\infty$ topology on C$^\infty(M,\R{})$,
and therefore that the function $\phi$ is C$^\infty$.

Now let us consider the functions $F=f+\phi$ and $G=g-\phi$. These functions still verify the hypothesis of \ref{ins} therefore there is a function $\varphi$ between $F$ and $G$, but since $\phi$ is positive on $M\setminus E$, this proves the corollary.
\end{proof}

\section{Geometric form}
If $(X,d)$ is a metric space, as usual for a non-empty subset $C\subset X$, and $x\in X$,
we define
$$d(x,C)=\inf\{d(x,c)\mid c\in C\}.$$
As is well known, the function $x\mapsto d(x,C)$ is continuous (and even Lipschitz). Moreover, we have
$$d(c,C)=d(x,\overline C),\text{ and }\overline C=\{x\in X \mid d(x,C)=0\},$$
where $\overline C$ is the closure of $C$.

If $A,B\subset X$, we define
$$d(A,B)=\inf\{d(x,y)\mid x\in A,y\in B\}=\inf_{x\in A}d(x,B)=\inf_{y\in B} d(y,A).$$
We will need the following lemma which we state and prove for a Riemannian manifold. From the proof it is clear that it holds for metric length  spaces. Note also that this lemma does not hold for a general metric space.
\begin{lm}\label{lemmedist} Suppose that the distance $d$ on the connected manifold $M$  comes from a Riemannian metric.
Let $A,B$ and $S$ be non-empty subsets of $M$, with $A\subset \overline S$, and $B\cap \INT{S}=\varnothing$, then
$$d(A,B)\geq d(A,\partial S)+d(\partial S,B),$$
where $\partial S$ is the boundary of $S$ in $M$. (Note that $\partial S$ is not empty since the non-empty set $B$ is contained in 
$M\setminus \INT{S}$.)
\end{lm}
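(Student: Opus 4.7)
The plan is to use the fact that on a connected Riemannian manifold the distance $d(x,y)$ equals the infimum of the lengths of piecewise $C^1$ curves joining $x$ to $y$, and to show that every such curve from $A$ to $B$ is forced to cross $\partial S$ somewhere. Splitting the length at the crossing point will give the inequality for each fixed pair $(a,b)\in A\times B$, and taking infima will then conclude.

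Fix $a\in A$, $b\in B$, and consider a piecewise $C^1$ curve $\gamma:[0,1]\to M$ with $\gamma(0)=a$ and $\gamma(1)=b$. The first and essential step is topological: I would produce $t_0\in[0,1]$ such that $\gamma(t_0)\in\partial S$. If $b\in\partial S$, take $t_0=1$; otherwise $b\in M\setminus\overline{S}$ (since $b\notin\INT{S}$ by assumption), so the set $T=\gamma^{-1}(\overline{S})$ is closed in $[0,1]$, contains $0$ because $a\in\overline{S}$, and does not contain $1$. Setting $t_0=\sup T$, one has $\gamma(t_0)\in\overline{S}$ while $\gamma(t)\notin\overline{S}$ for all $t>t_0$; if $\gamma(t_0)$ were in $\INT{S}$, continuity of $\gamma$ would place $\gamma(t)\in\INT{S}\subset\overline{S}$ for all $t$ in a neighbourhood of $t_0$, contradicting the choice of $t_0$. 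Hence $\gamma(t_0)\in\overline{S}\setminus\INT{S}=\partial S$.

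Once $t_0$ is in hand, length additivity of $\gamma$ at $t_0$ and the definition of the distance yield
$$L(\gamma)=L(\gamma|_{[0,t_0]})+L(\gamma|_{[t_0,1]})\geq d(a,\gamma(t_0))+d(\gamma(t_0),b)\geq d(a,\partial S)+d(\partial S,b).$$
Taking the infimum over all such $\gamma$ gives $d(a,b)\geq d(a,\partial S)+d(\partial S,b)$, and taking the infimum over $a\in A$ and $b\in B$ delivers the desired inequality.

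The main---indeed only---real obstacle is the topological step of locating $t_0$; everything else reduces to the characterisation of the Riemannian distance as an infimum of lengths of curves and the additivity of length under concatenation. This is exactly where the length-space hypothesis is essential: in a general metric space $d(a,b)$ need not be controlled by any curve joining $a$ to $b$, which is precisely why the author notes that the lemma fails outside the Riemannian (or length-space) setting.
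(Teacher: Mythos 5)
Your proof is correct and follows essentially the same strategy as the paper: every curve joining $A$ to $B$ must meet $\partial S$, so splitting its length at that point and passing to the infimum gives the inequality. The only cosmetic difference is that you locate the crossing via a first-exit (supremum) argument on $\gamma^{-1}(\overline S)$, while the paper argues by connectedness of $\gamma([0,1])$, and you work with fixed endpoints $a,b$ before taking infima whereas the paper takes a minimizing sequence of curves from $A$ to $B$ directly; both variants are standard and equivalent.
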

\begin{proof} Since the distance $d$ is obtained from Riemannian length of curves, we can find a sequence of smooth curves $\gamma_n:[0,1]\to M$, such that $\gamma_n(0)\in A, \gamma_n(1)\in B$,
and $\operatorname{length}(\gamma_n)\to d(A,B)$. For every $n$, we have $\gamma_n([0,1])\cap \partial S\neq \varnothing$.
In fact, if this were not true, for some $n$, we would have that the connected set  $\gamma_n([0,1])$ is contained in
the disjoint union of open subsets $\INT{S}\cup (M\setminus \overline S)=M\setminus \partial S$.
This would imply that $\gamma_n([0,1])$ is included in exactly one of the two sets.
This is impossible, because $\gamma_n(0)\in A$, therefore $\gamma_n(0)\notin M\setminus \overline S$,
and $\gamma_n(1)\in B$, therefore $\gamma_n(1)\notin \INT{S}$.

Since $\gamma_n([0,1])\cap \partial S\neq \varnothing$, for each $n$, we can find 
$t_n\in [0,1]$ such that $\gamma_n(t_n)\in \partial S$. We have
\begin{align*}
\operatorname{length}(\gamma_n)&\geq d(\gamma_n(0),\gamma_n(t_n))+d(\gamma_n(t_n),\gamma_n(0))\\
&\geq d(A,\partial S)+d(\partial S, B)
\end{align*}
It suffices to let $n\to \infty$ to finish the proof.
\end{proof}
This lemma has several consequences. To obtain them, we recall the following facts.
If $C\neq \varnothing$ is a subset of the metric space $X$, and $r> 0$, we set
$$V_r(C)=\{x\in X\mid d(x,C)<r\}.$$
Obviously, the set  $V_r(C)$ is open in $M$, and its closure $\overline V_r(C)$ is contained in the closed subset 
$\{x\in X\mid d(x,C)\leq r\}$.
This implies that 
$$\partial V_r(C)\subset \{x\in X\mid d(x,C)=r\}.$$
In particular, if $\partial V_r(C)\neq\varnothing$, then
$$d(\partial V_r(C),C)=r.$$
For a general metric, the inclusions $\overline V_r(C)\subset \{x\in X\mid d(x,C)\leq r\}, \partial V_r(C)\subset \{x\in X\mid d(x,C)=r\}$ are
usually strict. We will now obtain as a consequence of Lemma \ref{lemmedist} that they are equality for the case of Riemannian manifolds.
\begin{co} Suppose $M$ is a connected manifold endowed with a distance $d$ coming from a Riemannian metric.
Let $C\subset M$ be a non-empty subset, and let $r$ be a $>0$ number.
We have 
\begin{itemize}
\item[\rm(1)]  if $x\notin \INT{C}$, then $d(x, C)=d(x,\overline C)=d(x, \partial C)$.
\item[\rm (2)] for every $x\notin V_r(C)$, we have $d(x,C)=d(x,\partial V_r(C))+r$;
\item[(3)] $\overline V_r(C)= \{x\in X\mid d(x,C)\leq r\}$, and $\partial V_r(C)=\{x\in X\mid d(x,C)=r\}$;

\end{itemize}
Moreover if $A,B$ are non-empty subsets of $M$, 
\begin{itemize}
\item[\rm (4)] for every $r$ such that $0<r\leq d(A,B)$, we have 
$$d(B,V_r(A))= d(B,\overline V_r(A))=d(B,\partial V_r(A)),$$
and 
$$d(A,B)=r+d(B,V_r(A))=r+d(B,\partial V_r(A)).$$
\end{itemize}
\end{co}
\begin{proof} To prove (1), we apply Lemma \ref{lemmedist}, $A=S=C, B=\{x\}$ to obtain
$$d(x,C)\geq d(x, \partial C)+ d(\partial C,C).$$
But $d(\partial C,C)=0$. Therefore, we obtain $d(x,C)\geq d(x,\partial C)$. Note that the opposite inequality is true, since
$d(x,C)=d(x,\overline C)$, and $\partial C\subset \overline C$.

For (2), notice that $C\subset V_r(C)$, and $\{x\}$ is disjoint from the open subset $V_r(C)$.
Therefore, we can apply Lemma \ref{lemmedist}, with $A=C,S=V_r(C), B=\{x\}$ to obtain
$$d(x,C)\geq d(C, \partial V_r(C))+d(x, \partial V_r(C)).$$
But, as we have noticed above $d(C, \partial V_r(C))=r$. Hence, we obtain $d(x,C)\geq d(C, \partial V_r(C))+r$
to prove the converse inequality, we consider $y\in V_r(C)$, and write
$$d(x,C)\leq d(x,y)+d(y,C)\leq d(x,y)+r.$$
Taking the infimum over $y\in \partial V_r(C)$, we obtain $d(x,C)\leq d(C, \partial V_r(C))+r$

For (3), since $\overline V_r(C)=V_r(C)\cup \partial V_r(C)$, and $\partial V_r(C)\subset\{x\in X\mid d(x,C)=r\}$, 
we only need to show that $\partial V_r(C)\supset\{x\in X\mid d(x,C)=r\}$. If $d(x,C)=r$, we have $x\notin V_r(C)$,
and by (2) above $d(x,\partial V_r(C))=0$.
Since $\partial V_r(C))$ is closed, we obtain $x\in \partial V_r(C)$.

To prove (4) we notice that for any $y\in B$, we have $y\notin V_r(A)$, hence by  (1) above we obtain
$d(y,V_r(A))=d(y,\partial V_r(A))$.
Taking the infimum over $y\in B$ yields $d(B,V_r(A))=d(B,\partial V_r(A))$
Moreover, by (2), for $y\in B$, we get $d(y,A)=d(y,\partial V_r(A))+r$. Taking the infimum over $y\in B$ yields 
$d(B,A)=r+d(B,\partial V_r(A))$.
\end{proof}
The following theorem is basically the geometric form of Ilmanen's lemma, even if it is stated under somewhat different hypothesis.
\begin{Th}[Geometric Form of Ilmanen's Lemma]\label{GeomFormIlmanen}
Suppose $M$ is a connected manifold, endowed with a Riemannian metric of class C\/$^2$. We 
denote by $d$ the distance associated to the Riemannian metric.
If  the closed non-empty subset $A$ of $M$, and $a>0$ are such that $V_a(A)\neq M$ (or equivalently $\partial V_a(M)\neq \varnothing$ by the connectedness of $M$), then for every $\rho\in ]0,a[$, we can find a closed domain $\Sigma_\rho$
such that 
\begin{itemize}
\item[\rm(i)]  the boundary $\partial \Sigma_\rho$ of $\Sigma_\rho$ is C\/$^{1,1}$ submanifold;
\item[\rm(ii)] $\Sigma_\rho\supset \overline V_\rho(A)$;
\item[\rm(iii)] $\Sigma_\rho\subset \{x\mid d(x, M\setminus V_a(A))\geq a-\rho\}\subset V_a(A)$;
\item[\rm(iv)] $d(\partial\Sigma_\rho,A)=\rho$ and $d(\partial\Sigma_\rho, M\setminus V_a(A))=a-\rho$.
\item[\rm(v)]$\{x\in M\mid d(x,A)=\rho, d(x, M\setminus V_a(A))=a-\rho)\}\subset \partial\Sigma_\rho$.
\end{itemize}
\end{Th}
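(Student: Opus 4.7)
The plan is to reduce to Ilmanen's insertion lemma, specifically its strict version (Corollary \ref{coril}), applied to two natural distance-based functions on the open set $U := V_a(A)\setminus A$, and then to take $\Sigma_\rho$ as a sublevel set of the resulting $C^{1,1}$ function.

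I first set $B := M\setminus V_a(A)$, which is closed and nonempty by hypothesis, and define on $U$
$$F(x) := d(x,A)-\rho, \qquad G(x) := (a-\rho)-d(x,B).$$
Using the classical fact (see e.g.\ \cite{CaS}) that the distance to a closed subset of a $C^2$ Riemannian manifold is locally semi-concave with linear modulus on its complement, $F$ is locally semi-concave and $G$ locally semi-convex on $U$, both for linear moduli. Item (3) of the preceding corollary identifies $\partial V_a(A)=\{d(\cdot,A)=a\}$, hence $d(A,B)=a$, which combined with the triangle inequality yields $G\leq F$ on $U$. Corollary \ref{coril} then furnishes a $C^{1,1}$ function $h:U\to\R{}$ with $G\leq h\leq F$, and $G<h<F$ wherever $G<F$. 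I take
$$\Sigma_\rho := A\cup\{x\in U: h(x)\leq 0\}.$$
Closedness of $\Sigma_\rho$ in $M$ is automatic: on $\{h\leq 0\}\cap U$ we have $d(\cdot,B)\geq a-\rho>0$, ruling out any accumulation on $B$. Property (ii) is then immediate because $F\leq 0$ on $\overline{V_\rho(A)}\setminus A$ forces $h\leq 0$ there, and property (iii) follows since $h\leq 0$ forces $G\leq 0$, i.e.\ $d(\cdot,B)\geq a-\rho$.

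For (v) I observe that if $d(x,A)=\rho$ and $d(x,B)=a-\rho$ then $F(x)=G(x)=0$, which forces $h(x)=0$ and hence $x\in\Sigma_\rho$; since $d(\cdot,B)$ cannot be locally constant on a Riemannian manifold (its gradient has unit norm wherever it exists), every neighborhood of $x$ contains points where $G>0$ and so $h>0$, which therefore lie outside $\Sigma_\rho$, showing $x\in\partial\Sigma_\rho$. For (iv), since $A\subset V_\rho(A)\subset\Sigma_\rho$ is interior, $\partial\Sigma_\rho\subset U\cap\{h=0\}$, which gives the lower bounds $d(\partial\Sigma_\rho,A)\geq\rho$ and $d(\partial\Sigma_\rho,B)\geq a-\rho$. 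For the reverse bounds I choose approximate minimizing paths from $A$ to $B$ of length tending to $d(A,B)=a$; each such path must cross $\partial\Sigma_\rho$, and the two lower bounds pinch the crossing time into an interval collapsing to $\rho$.

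The principal obstacle is (i): proving that $\partial\Sigma_\rho$ is a $C^{1,1}$ submanifold. At a boundary point $x_0$ where $F(x_0)=0$, the fact that $F-h\geq 0$ vanishes at $x_0$, combined with $h$ being $C^1$ and $F$ semi-concave, forces every supergradient of $F$ at $x_0$ to coincide with the differential of $h$ at $x_0$; hence $F$ is differentiable at $x_0$ with this differential, which has unit Riemannian norm because $F$ is a distance function. The symmetric argument handles $G(x_0)=0$. The residual case $G(x_0)<0<F(x_0)$ with $h(x_0)=0$ possibly a critical point of $h$ is the delicate one: here the strict inequality from Corollary \ref{coril} leaves $h$ free on the open region $\{G<0<F\}$, and I would modify $h$ there by a small $C^\infty$ perturbation, applying Sard's theorem to the perturbation to arrange that $0$ is a regular value of the modified function while preserving $G\leq h\leq F$. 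Once $0$ is a regular value, the implicit function theorem realizes $\{h=0\}$ as a $C^{1,1}$ hypersurface agreeing with $\partial\Sigma_\rho$ near every boundary point, completing (i).
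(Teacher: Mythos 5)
Your overall strategy matches the paper's closely: you define, on the open shell $U = V_a(A)\setminus A$, a semi-concave and a semi-convex function built from $d(\cdot,A)$ and $d(\cdot,M\setminus V_a(A))$ (your $F,G$ are the paper's $g,f$ shifted by $\rho$), apply Corollary~\ref{coril} to insert a C$^{1,1}$ function $h$, take $\Sigma_\rho$ as the union of $A$ with a sublevel set of $h$, and then handle (i) by observing that $\d h$ is forced to have unit norm at boundary points where the insertion is tight and modifying $h$ elsewhere to make $0$ a regular value. Your derivation of (ii)--(v) is sound, and your gradient argument at points where $F=0$ or $G=0$ is a valid (slightly rearranged) variant of the paper's argument at points of $E=\{f=g\}$.

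The gap is in the perturbation step for (i), which is where the real work of the proof lives. First, invoking ``Sard's theorem'' here does not work: Sard for maps $M^n\to\R{}$ requires C$^n$ regularity, and your $h$ is only C$^{1,1}$; a C$^{1,1}$ function can fail to have any regular value at all when $\dim M\geq 3$ (the paper explicitly flags this). Even the parametric version, say perturbing to $h-c\psi$ with a bump $\psi$ and applying transversality in $c$, runs into the same smoothness barrier. Second, you do not explain how to glue the perturbation to the unperturbed $h$ near $\{F=0\}\cup\{G=0\}$ so that (a) the gradient is nonvanishing on the entire level set $\{h=0\}$, (b) the inequalities $G\leq h\leq F$ are preserved globally, and (c) no new critical points at level $0$ are introduced in the transition region. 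The paper resolves all three points by working in the strong (Whitney) topology: it finds an open $W$ containing $E$ on whose closure $\d h\neq0$, uses a C$^1$-strong neighborhood $\mathcal V$ in which this persists, interpolates via a cutoff $\theta$ supported away from $E$, and finally appeals not to Sard but to the density of C$^\infty$ functions with a prescribed regular value in the strong topology (\cite[Exercise 2(a), page 74]{Hir}). Without some equivalent of that machinery, your sketch does not establish that $0$ can be made a regular value while keeping $G\leq h\leq F$, so (i) is not proved.
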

\begin{proof} Note that both $A$ and $M\setminus V_a(A)$ are not empty. We also have
$d(A, M\setminus V_a(A))\geq a$, Moreover, since $\partial \overline V_a(A)\neq \varnothing$, we can find an $x_0$ such that $d(x_0,A)=a$.
This $x_0$ is in  $M\setminus V_a(A)$. Hence, we get
$$d(A, M\setminus V_a(A))=a.$$
In particular, we obtain
\begin{equation}\label{ine}
\forall x\in M, \ \ d(x,A)+d(x, M\setminus V_a(A))\geq d(A, M\setminus V_a(A))=a.
\end{equation} 
Therefore, if we define $f,g:M\to \R{}$ by
$$ f(x)=a-d(x, M\setminus V_a(A)),\text{ and, }g(x)=d(x,A),$$
we have $f\leq g$ everywhere. If $B$ is a closed subset of $M$, as is well-known $x\mapsto d(x,B)$ is a viscosity solution
on $M\setminus B$ of the eikonal equation
\begin{equation}\label{eikonal}
\lVert \d_xu\rVert_x=1,
\end{equation}
where $\lVert \cdot\rVert_x$ is the norm obtained from the Riemannian metric on the cotangent space $T^*_xM$ at $x$. 
Since we are assuming the Riemannian metric to be C$^2$, this implies that $x\mapsto d(x,B)$ is locally semi-concave
with linear modulus on $M\setminus B$. We obtain from this that the restrictions of $f$ and $g$ to the open set
$U_a=V_a(A)\setminus A$ are respectively locally semi-convex and locally semi-concave with a linear modulus.
Therefore, by Corollary \ref{coril}, we can find a C\/$^{1,1}$ function $h:U_a\to \R{}$ with $f\leq h\leq g$, and
$f<h<g$ outside of the set 
$$E=\{x\in U_a\mid f(x)=g(x)\}=
\{x\in U_a\mid d(x,A)+d(x, M\setminus V_a(A))=a\}.$$
In fact, if $\rho$ was a regular value of $h$, we could finish with $\Sigma_\rho=A\cup \{x\in U_a\mid h(x)\leq \rho\}$.
Unfortunately, a C\/$^{1,1}$ function may not even have regular values if the dimension of $M$ is $\geq 3$.

We now proceed to modify $h$ in order to have $\rho$ as a regular value. We will need to use the  strong
topology on space of differentiable maps. This topology is also called the Whitney topology. On these matter, we refer to  \cite[Chapter 2, \S1]{Hir}. If $N,P$ are manifolds
we will use the notation $C^r_S(N,P)$, introduced in \cite{Hir},  for the space of C\/$^r$ maps from $N$ to $P$
endowed with the strong (or Whitney) topology.

Since $h\leq g$, with equality on $E$, and $g$ is a viscosity solution of the eikonal equation,
we obtain
$$\forall x\in E,\ \  \lVert \d_xh\rVert_x\geq 1.$$
(In fact, using $f\leq h$, we could show equality on $E$ in the above inequality, but we will not need this). In particular
$\d_xh\neq 0$ on the closed subset $E$ of $U_a$. Therefore we can find an open subset $W\subset U_a$ such that
the derivative of $h$ is never $0$ on the closure $\overline W$ in $U_a$. Note that this imply that we can find a neighborhood
$\cal V$ of $h$ in $C^1_S(U_a,\R{})$ (the space  $C^1(U_a,\R{})$ endowed with Whitney or strong topology) such that for every 
$\tilde h\in {\cal V}$, we have $d_x\tilde h\neq0$, for every $x\in \overline W$. We now pick a function $\theta: U_a \to [0,1]$ such that $\theta\equiv 1$ on a neighborhood of $U_a\setminus W$, and
whose support $F$ is contained in $U_a\setminus E$. If $\tilde h:U_a\setminus E\to\R{}$ converges in the C\/$^1$ strong topology on $C_S^1(U_a\setminus E,\R{})$ to the restriction $h|U_a\setminus E$, then $\theta \tilde h$ converges to 
$\theta h$ in  $C^1_S(U_a\setminus E,\R{})$. Since all these functions are $0$ outside of the closed set $F$ which is disjoint from $E$, in fact, we obtain that $\theta\tilde h$ converges to 
$\theta h$ in $C_S^1(U_a,\R{})$. It follows that $\theta \tilde h+(1-\theta)h$ converges to $h$
in $C_S^1(U_a,\R{})$. Therefore we can find an open neighborhood $\cal W$ of $h|U_a\setminus E$
in $C_S^1(U_a\setminus E,\R{})$, such that for every $\tilde h\in{\cal W}$, we have 
$\theta \tilde h+(1-\theta)h\in{\cal V}$ and therefore $\theta\tilde h+(1-\theta)h$ has no critical point in $\overline W$.
Since $f<h<g$ on $U_a\setminus E$ cutting down on the neighborhood $\cal W$ of $h|U_a\setminus E$ in $C_S^1(U_a\setminus E,\R{})$, we may also assume that 
we have $f<\tilde h<g$, for every $\tilde h\in {\cal W}$. We now use the fact, see \cite[Exercise 2 (a), page 74]{Hir}, that C\/$^\infty$ functions with a given prescribed value as
a regular value are dense in the strong (or Whitney) topology to obtain a C\/$^\infty$ map $\tilde h \in {\cal W}$
 with $\rho$ as a regular value
on $U_a\setminus E$. Wet set $\overline h=\theta \tilde h+(1-\theta)h$. Since $\theta$ is equal to $1$ on a neighborhood
of $U_a\setminus W$, it follows that $\overline h=\tilde h$ on this neighborhood, and therefore $\rho$ is a regular value of $\overline h$
on a neighborhood of $U_a\setminus W$. Since $\tilde h\in {\cal W}$, we know that $\overline h$ has no critical point in $W$.
Therefore $\rho$ is a regular value of the C\/$^{1,1}$ function $\overline h$. Note that by construction, we have
$$f\leq \overline h\leq g$$
everywhere on $U_a$ (even with strict inequalities on $U_a\setminus E$).
This can be rewritten 
\begin{equation}\label{GRANDEINE}
\forall x\in V_a(A)\setminus A,\ \  a-d(x, M\setminus V_a(A))\leq \overline h\leq d(x,A).
\end{equation}
We now set 
$$\Sigma_\rho=A\cup\{x\in V_a(A)\setminus A\mid \overline h(x)\leq \rho\}.$$
>From the right hand side inequality in (\ref{GRANDEINE}), we obtain the point (ii) of the theorem
$$\overline V_\rho(A)\subset \Sigma_\rho.$$
Note that this implies that $\partial \Sigma_\rho$ is disjoint for $A$.
>From the left hand side inequality in (\ref{GRANDEINE}), taking into account that $d(A, M\setminus V_a(A))=a$, we obtain
that
$$\forall x\in \Sigma_\rho, d(x, M\setminus V_a(A))\geq a-\rho>0,$$
which is point (iii) of the theorem. Note that is implies that $\partial \Sigma_\rho$ is contained in the open set $V_a(A)$. Since it is also disjoint from $A$, we get $\partial \Sigma_\rho\subset V_a(A)\setminus A=U_a$. But 
$U_a\cap  \Sigma_\rho=\{x\in U_a\mid \overline h(x)\leq \rho\}.$
Since $\rho$ is a regular value of the C\/$^{1,1}$ map $\overline h:U_a\to\R{}$, the implicit function theorem implies that
$\overline h^{-1}(\rho)$ is a C\/$^{1,1}$ hypersurface and that $\partial \Sigma_\rho=\overline h^{-1}(\rho)\subset \Sigma_\rho$. This proves
that $\Sigma_\rho$ is closed and also  point (i) of the theorem.

It is clear now that point (v) of the theorem follows from the inequalities (\ref{GRANDEINE}).

It remains to prove point (iv) of the theorem. From (ii) and (iii), it follows that
$$\forall x \in \partial \Sigma_\rho,\ \  d(x,A)\geq \rho,\text{ and }d(x, M\setminus V_a(A))\geq a-\rho.$$
Therefore 
$$d(\partial\Sigma_\rho,A)\geq \rho, \text{ and }d(\partial\Sigma_\rho, M\setminus V_a(A))\geq a-\rho.$$
To finish the proof of (iv) it suffices to show that
$$d(\partial\Sigma_\rho,A)+d(\partial\Sigma_\rho, M\setminus V_a(A))\leq a=d(A, M\setminus V_a(A)).$$
But this follows Lemma \ref{lemmedist} . Since $A\subset \Sigma_\rho$, and the closed set $\Sigma_\rho$
is disjoint from $M\setminus V_a(A)$.
\end{proof}
>From Theorem \ref{GeomFormIlmanen}, we could obtain apparently stronger statements. We will only give this one:
\begin{Th}\label{dernier} Suppose $M$ is a connected manifold, endowed with a Riemannian metric of class C\/$^2$. We 
denote by $d$ the distance associated to the Riemannian metric.
If  $A,B\subset M$ are  closed non-empty disjoint subsets,  we can find a closed domain $\Sigma$ whose
boundary $\partial \Sigma$ is a C\/$^{1,1}$ submanifold, such that 
\begin{gather*}
A\subset \INT{\Sigma},\Sigma \cap B=\varnothing,\\
d(A,\partial \Sigma)=d(\partial \Sigma,B)=\frac{d(A,B)}2,\\
\left \{x\in M\mid d(x,A)=d(x,B)=\frac{d(A,B)}2\right\}\subset \partial \Sigma.
 \end{gather*}
Moreover, if $a=d(A,B)>0$, we can assume that $\Sigma\subset V_a(A)$. In particular, if $A$ is compact and the Riemannian metric
on $M$ is complete, we can assume also that $\Sigma$ is compact.
\end{Th}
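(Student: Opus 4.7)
The plan is to reduce Theorem \ref{dernier} directly to the Geometric Form of Ilmanen's Lemma (Theorem \ref{GeomFormIlmanen}). Assume first that $a = d(A,B) > 0$. Since $B$ is a non-empty closed set and every $b \in B$ satisfies $d(b,A) \geq a$, the set $B$ lies in $M \setminus V_a(A)$; in particular $V_a(A) \neq M$. All hypotheses of Theorem \ref{GeomFormIlmanen} are therefore satisfied for this $A$ and this $a$, and I apply it with $\rho = a/2 \in\, ]0,a[$, setting $\Sigma := \Sigma_{a/2}$.

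I then read off the required properties from the conclusions (i)--(v) of that theorem. Property (i) is exactly that $\partial \Sigma$ is a C$^{1,1}$ submanifold. Property (ii) gives $\overline V_{a/2}(A) \subset \Sigma$; since the open set $V_{a/2}(A)$ contains $A$, this places $A$ in $\INT{\Sigma}$. Property (iii) gives $\Sigma \subset V_a(A)$, from which both $\Sigma \cap B = \varnothing$ and the ``moreover'' clause $\Sigma \subset V_a(A)$ follow at once. The first of the distance equalities, $d(A, \partial \Sigma) = a/2$, is half of (iv). For the second, I combine $B \subset M \setminus V_a(A)$ with (iv) to get $d(\partial \Sigma, B) \geq d(\partial \Sigma, M \setminus V_a(A)) = a/2$, and then apply Lemma \ref{lemmedist} with $S = \Sigma$ (noting $A \subset \Sigma$ and $B \cap \INT{\Sigma} = \varnothing$) to obtain
$$a = d(A,B) \geq d(A, \partial \Sigma) + d(\partial \Sigma, B) = a/2 + d(\partial \Sigma, B),$$
hence the reverse inequality. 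Finally, if $x$ satisfies $d(x,A) = d(x,B) = a/2$, then $d(x, M \setminus V_a(A)) \leq d(x,B) = a/2$, while inequality (\ref{ine}) forces $d(x, M \setminus V_a(A)) \geq a - a/2 = a/2$; so conclusion (v) puts $x$ in $\partial \Sigma$.

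For the compactness addendum, if $A$ is compact and the Riemannian metric is complete, then $V_a(A)$ is bounded, so by Hopf--Rinow it has compact closure, and the closed subset $\Sigma \subset V_a(A)$ is compact. The only case not covered is the degenerate one $a = 0$, where Theorem \ref{GeomFormIlmanen} does not apply directly; this is the step I expect to be the main obstacle. There the distance equalities all collapse to $0$ and the statement reduces to producing a closed domain with C$^{1,1}$ boundary containing $A$ in its interior and avoiding $B$. Such a domain can be built by a direct smooth separation argument on the paracompact manifold $M$: one picks a C$^\infty$ function $\phi \colon M \to [0,1]$ with $\phi \equiv 1$ on $A$ and $\phi \equiv 0$ on $B$ (partition of unity), and takes $\Sigma = \phi^{-1}([c,1])$ for a regular value $c \in\, ]0,1[$ given by Sard's theorem, which bypasses Ilmanen's machinery entirely.
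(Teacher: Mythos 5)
Your argument follows the same route as the paper's: for $a=d(A,B)>0$ you invoke Theorem \ref{GeomFormIlmanen} with $\rho=a/2$, read off (i)--(iii) for the domain, boundary regularity, and the inclusion $\Sigma\subset V_a(A)$, and then use Lemma \ref{lemmedist} together with $B\subset M\setminus V_a(A)$ to pin down $d(\partial\Sigma,B)=a/2$ and to verify the containment of the mid-set via (v). The compactness addendum by Hopf--Rinow is identical to the paper's.

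The one place where you wave your hands is the degenerate case $a=0$. You say the distance equalities ``all collapse to $0$'' and that the statement then reduces to producing any closed domain with C$^{1,1}$ boundary containing $A$ in its interior and missing $B$. That reduction is not quite complete: once you build $\Sigma=\phi^{-1}([c,1])$, it is \emph{not} automatic that $d(A,\partial\Sigma)=0$ and $d(\partial\Sigma,B)=0$. The hypothesis $d(A,B)=0$ does not by itself force $\partial\Sigma$ to approach $A$ or $B$. You still need to invoke Lemma \ref{lemmedist} with $S=\Sigma$: since $A\subset\Sigma$ is closed and $B\cap\INT{\Sigma}=\varnothing$, the lemma gives $0=d(A,B)\geq d(A,\partial\Sigma)+d(\partial\Sigma,B)$, whence both distances vanish. (This is exactly what the paper does.) You should also note explicitly that $\{x\in M\mid d(x,A)=d(x,B)=0\}$ is empty because $A$ and $B$ are closed and disjoint, so the last containment is vacuous in this case. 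These are small, fixable omissions, and since you already use Lemma \ref{lemmedist} in the $a>0$ case, the tools are at hand; but as written the $a=0$ case does not fully establish the claimed distance equalities.
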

\begin{proof} We first assume $d(A,B)=a>0$. We use Theorem \ref{GeomFormIlmanen}, with $A,a$ and $\rho=a/2$
 to obtain $\Sigma=\Sigma_{a/2}$. We have $A\subset \INT{\Sigma}\subset \Sigma\subset V_a(A)$, and also
 \begin{gather*}
 d(A,\partial \Sigma)=d(\partial \Sigma,M\setminus V_a(A))=\frac a2,\\
\left \{x\in M\mid d(x,A)=d(x,M\setminus V_a(A))=\frac a2\right\}\subset \partial \Sigma.
\end{gather*} 
  In particular, we have $\Sigma\cap B=\varnothing$, because $B\subset M\setminus V_a(A)$. This last inclusion implies that
\begin{equation}\label{lafin}
\forall x\in M,\ \  d(x,B)\geq d(x,M\setminus V_a(A)).
\end{equation}
Therefore $d(\partial \Sigma,B) \geq d(\partial \Sigma,M\setminus V_a(A))=a/2$. Since $d(A,\partial \Sigma)=a/2$,
to prove that $d(\partial \Sigma,B)=a/2$, it now suffices to show that
 $$d(\partial \Sigma,B)+d(A,\partial \Sigma)\leq a=d(A,B).$$
 but this follows from Lemma \ref{lemmedist}, since the closed set $\Sigma$ contains $A$, and is disjoint from $B$.
 
 We now show that any $x$ satisfying $d(x,A)=d(x,B)=a/2$ is necessarily in $\partial \Sigma$. By the definition of $\Sigma$
 it suffices to show $d(x,M\setminus V_a(A))=a/2$. By the inequality (\ref{lafin}), we know that $d(x,M\setminus V_a(A))\leq a/2$.
 therefore we get 
 $$a=\frac a2+\frac a2\geq d(x,A)+d(x,M\setminus V_a(A))\geq d(A,M\setminus V_a(A))\geq a.$$
 This implies that we have the equality $d(x,M\setminus V_a(A))= a/2$.
 This finishes the proof in the case $d(A,B)=a>0$.

 Suppose  $d(A,B)=0$. Since $A\cap B=\varnothing$, we can find a C$^\infty$ function $\varphi:M\to [0,1]$ such that
 $\varphi$ is $0$ on $A$ and $1$ on $B$. We pick a regular value $r\in ]0,1[$ of $\varphi$. The closed set 
 $\Sigma=\{x\in M\mid \varphi(x)\leq r\}$ has a C$^\infty$ boundary, contains $A$ and is disjoint from $B$. By Lemma
 \ref{lemmedist}, we obtain $0=d(A,B)\geq d(A,\partial \Sigma)+d(\partial \Sigma,B)$. It follows that $d(A,\partial \Sigma)=d(\partial \Sigma,B)=0$. Note that since $A,B$ are closed and disjoint the set  $\{x\in M\mid d(x,A)=d(x,B)=0=d(A,B)/2\}$ is empty.
 
 Suppose now that $A$ is compact, since it is disjoint from $B$, we must have $d(A,B)=a>0$. The $\Sigma$ constructed above
 is contained in $V_a(A)$. If the Riemannian metric is complete, then $V_a(A)$ is, like any bounded set, relatively compact.
 Therefore its closed subset $\Sigma$ is compact. 
 \end{proof}
 
 \begin{rem}\rm
 In the last part of the previous theorem (\ref{dernier}), even if the metric is not assumed to be complete, it is possible to find a compact set $\Sigma$ whose
boundary $\partial \Sigma$ is a C\/$^{1,1}$ submanifold, such that 
\begin{gather*}
A\subset \INT{\Sigma},\Sigma \cap B=\varnothing,\\
d(A,\partial \Sigma)+d(\partial \Sigma,B)=d(A,B),\\
\exists \e>0, \ \ \left \{x\in M\mid \e=d(x,A)=d(A,B)-d(x,B)\right\}\subset \partial \Sigma.
 \end{gather*}
In fact, it is enough to pick $\e>0$ small enough for the neighborhood $V_{2\e}(A)$ to be relatively compact in $M$ and to repeat the previous proof with $\rho=\e$.
 \end{rem}
 
\section{Open problem}
We would like to conclude the paper with an open problem:
\begin{OpenProblem} \rm Suppose that $\omega$ is a modulus, that $f$ and $g$ are respectively  a locally semi-convex semi-concave function for the modulus $\omega$, with $f\leq g$. Is it always possible to find a C$^{1,\omega}$ function $h$ with
$f\leq h\leq g$? What about the H\"older moduli $\omega_\alpha(t)=t^\alpha,\alpha<1$.
\end{OpenProblem}

\end{document}